\theoremstyle{remark}
\newtheorem{lem}{\bf Lemma}[section]
\newtheorem{thm}{\bf Theorem}[section]
\newtheorem{cor}{\bf Corollary}[section]
\newtheorem{prop}{\bf Proposition}[section]
\numberwithin{equation}{section} \numberwithin{figure}{section}
\renewcommand*{\to}{\rightarrow}
\renewcommand*{\bar}[1]{\overline{#1}}
\newcommand{\alg}{\operatorname{alg}}
\newcommand{\ch}{\operatorname{ch}}
\newcommand{\Gr}{\operatorname{Gr}}
\newcommand{\ev}{\operatorname{ev}}
\newcommand{\mb}[1]{\mathbb{#1}} 
\newcommand{\hs}{\mathcal{H}}
\newcommand{\mc}[1]{\mathcal{#1}}
\newcommand{\Spec}{\operatorname{Spec}}
\newcommand{\Proj}{\operatorname{Proj}}
\begin{document}
\large \setcounter{section}{0}

\title{Weierstrass cycles in moduli spaces and the Krichever map}

\author{Jia-Ming (Frank) Liou}
\address{Max Planck Institut f\"{u}r Mathematik\\
Vivatsgasse 7\\
Bonn, 53111, Germany\\fjmliou@gmail.com }

\author{A. Schwarz}
\address{Department of Mathematics\\
University of California\\
Davis, CA 95616, USA\\ schwarz@math.ucdavis.edu}

\begin{abstract}\large
We analyze cohomological properties of the Krichever map and use the
results to study Weierstrass cycles in moduli spaces and the
tautological ring.
\end{abstract}

\maketitle

\allowdisplaybreaks



\allowdisplaybreaks

Let us consider a point $p$ on a smooth projective connected curve $C$ over $\mb C$
of genus $g.$ We say that a natural number $n$ is a non-gap if
there exists a function that is holomorphic on $C\setminus p$ and
has a pole of order $n$ at the point $p$ (in other words
$h^0(\mathcal{O}(np))>h^{0}(\mathcal{O}((n-1)p))$).

It is obvious that  the set of all non-gaps is a
semigroup; it is easy to derive from Riemann-Roch theorem that the
number of gaps (the cardinality of the complement to the set of non-gaps in $\mathbb{N}$)
is equal to $g.$ We denote by $H$ the set consisting of $0$ and of all integers $n$ such that $h^{0}(\mc O(np))> h^{0}(\mathcal{O}((n-1)p))$ (in other words, we include $0$ and all non-gaps into $H$). One says that $H$ is the Weierstrass semigroup at $p.$

One says that a subsemigroup $H$ of $\mb N_{0}$ such that $\#(\mb N_{0}\backslash
H)=g$ and $0\in H$ is a numerical semigroup of genus $g$; obviously any
Weierstrass semigroup belongs to this class. (Here $\mb N_0$ stands for the semigroup of non-negative integers). The point $p$ is a Weierstrass point if the first non-gap is $\leq g$ (i.e. $H\neq
\{0,g+1,g+2,\cdots\}$). There exist only a finite number of
Weierstrass points on a curve. Instead of Weierstrass semigroup $H,$
one can consider a decreasing sequence of integers such that $s_{i}$
is the largest integer with
\begin{equation*}
h^{0}(K_{C}(-s_{i}p))=i.
\end{equation*}
Here $K_C$ denotes the canonical line bundle on $C$. It follows from
the Riemann-Roch theorem that this sequence (the Weierstrass
sequence of the point $p$ ) has the form $s_{i}=a_{g-i+1}-1$ if
$1\leq i\leq g$ and $s_{i}=g-1-i$ if $i\geq g+1.$ Here
$1=a_1<\cdots<a_g$ denotes the increasing sequence of gaps.

Notice that all these statements remain correct if $p$ is a
nonsingular point of an irreducible (not necessarily smooth) curve
and the canonical line bundle is replaced by the dualizing sheaf
$\omega_{C}.$ (Every irreducible curve is a Cohen-Macaulay curve;
hence it is not necessary to consider a complex of sheaves talking
about the dualizing sheaf.) {\footnote {All curves we consider are
reduced irreducible projective curves. (in other words we work with projective integral
curves)}} Any numerical semigroup of  genus $g$ is a Weierstrass semigroup at a point on an irreducible curve of  (arithmetic) genus $g$; see Section 3.

Let us consider the moduli space $\mc M_{g,1}$ of non-singular
irreducible curves of genus $g$ with one marked point (one can
characterize this space as the universal curve). \footnote {One can consider this space as an orbifold or as a moduli space of a stack. However, we are interested only in cohomology over $\mathbb{C},$ therefore it is sufficient to consider it as a topological space.}  If $H$ is a
numerical semigroup of genus $g,$ we denote by $\mc M_H$ the subset
of $\mc M_{g,1}$ consisting of curves with marked points having
Weierstrass  semigroup $H$. The closure $W_{H}=\bar {\mc M_H}$ of
the Weierstrass   set $\mc M_H$ in $\mc M_{g,1}$ is called a
Weierstrass  cycle. Under some conditions, we calculate the
cohomology class $[W_{H}]$ dual to this cycle. 
(Our methods can be used also to calculate the element of Chow ring specified by
Weierstrass cycle).

Our problem is closely related to the problem of the calculation of
the homomorphism induced by the Krichever map $k:\widehat{\mc
M}_{g}\to \Gr(\hs).$ Here $\widehat{\mc M}_{g}$ stands for the
moduli space of triples $(C,p,z)$, where $C$ is a complex
connected smooth projective curve of genus $g$ with a point $p$ and a map $z:D\to \mb
D$ is an isomorphism from a closed set $D$ onto the closed unit disk
$\mb D=\{z\in\mb C:|z|\leq 1\}$ obeying $z(p)=0$. \footnote{  The embedding into Grassmannian induces topology on  $\widehat{\mc M}_{g}$. Of course, this topology can be described without the reference to Grassmannian.} We use the
notation $\Gr(\hs)$ for the Sato Grassmannian (as defined in \cite
{SW}) and the notation $\Gr _d(\hs)$ for index $d$ component of
Grassmannian. The Krichever map sends a triple $(C,p,z)$ into the
space $V,$ the closure of functions on the boundary of the disk $D$ that can be
extended to holomorphic differentials on the complement of $D$. (A
function $f(z)$ on $S^{1}$ is considered as a differential $f(z)dz$
restricted to the boundary of $D$.) The kernel and the cokernel of
$\pi_{-}|_{V}:V\to \hs_{-}$ are identified with
$H^{0}(C,\omega_{C})$ and $H^{1}(C,\omega_{C})$ respectively (see
\cite{Mulase}, \cite{SW}); hence the index $\pi_{-}|_{V}$ is $g-1.$ 
Here $\pi_{-}|_{V}$ stands for the orthogonal projection of $V$ into $\hs_-$; the projection is defined with
respect to Hermitian inner product $\langle f_{1},f_{2}\rangle
=\int_{S^{1}}f_{1}(z)\bar{f_{2}(z)}dz/2\pi.$  Hence the image
of the Krichever map lies in the component $\Gr_{g-1}(\hs)$. The
Krichever map commutes with the natural action of $S^1$ on
$\widehat{\mc M}_{g}$ and on $ \Gr(\hs).$ Thus it induces a
homomorphism of the $S^1$-equivariant cohomology of the connected
component $\Gr_{g-1}(\hs)$ of $\Gr(\hs)$ into the $S^1$-equivariant cohomology of
$\widehat{\mc M}_{g}.$ The latter is isomorphic to the conventional
cohomology of $\mc M_{g,1}$ (see \cite {LS} for more detail). In
\cite {LS}, we have calculated the images of a set of multiplicative
generators under the homomorphism induced by the Krichever map in
the $S^1$-equivariant cohomology of Grassmannian; in the present paper, we
will give an explicit formula for this homomorphism on additive
generators of this cohomology. In the paper \cite {LS2}, we
identified the $S^1$-equivariant cohomology of Grassmannian with the ring
of shifted symmetric functions (see also \cite {LamS}). We describe
the homomorphism induced by the Krichever map on this ring; we
specify the answers for various additive generators of $S^1$-equivariant
cohomology.

Weierstrass cycles $W_{H}$ are related to intersections of Schubert
cycles in the Grassmannian with the image of Krichever map. This allows us to obtain the information about
classes $[W_{H}]$ from the analysis of the homomorphism induced by
the Krichever map in the $S^1$-equivariant cohomology. The same technique
is used to obtain relations in the tautological rings of moduli
spaces. We obtain also similar results for the moduli spaces of irreducible
(possibly singular) curves with embedded disks.

In a separate paper \cite {LSX}, we show how to use the ideas of
present paper to obtain estimates for dimensions of Weierstrass
cycles. We perform calculations for moduli spaces of irreducible
curves of low genera.

\section{Krichever Map}
In the introduction, we have described the Krichever map
$k:\widehat{\mc M}_{g}\to \Gr(\hs)$ of the moduli space
$\widehat{\mc M}_{g}$ into Segal-Wilson version of Sato Grassmannian
(see \cite {SW} for more detail). This construction can be
generalized to projective integral curves (the marked point $p$ should be
non-singular, the disk $D$ should consist of non-singular points, instead of holomorphic differentials one should
consider sections of the dualizing sheaf of $C$). This follows from the
results of \cite {SW} and from the remark that the dualizing sheaf
of Cohen-Macaulay curve is a torsion-free rank one sheaf. We will
denote by $\widehat{\mathcal{CM}}_g$ the moduli space of triples $(C,p,z)$ where $C$ is a projective integral curve of genus $g,$ and $p$ is a nonsingular point, and $z$ is a local coordinate system around $p$ sending a closed set $D$ containing $p$ onto $\mb D$ with $z(p)=0$; the extension of the Krichever map to this space will be also denoted by $k$. The extended Krichever map is an embedding of  $\widehat{\mathcal{CM}}_g$ into Grassmannian
(this follows from the results of \cite {SW}); we can define the
topology on $\widehat{\mathcal{CM}}_g$ using this embedding. The
image of this embedding is called the Krichever locus. {\footnote
{The Krichever map can be defined also for reducible curves, but in
this case this map is not  an embedding and it is not continuos. In
particular, the Krichever map on the space of nodal curves with
disks is discontinuous. }}\\

Notice that a reasonable (separable) moduli space of singular curves (even
of Gorenstein curves) does not exist;  see \cite {HM}. It is
important that we consider curves with embedded disks. Identifying
the points of  $\widehat{\mathcal{CM}}_g$ corresponding to the same
curve $C$ with different embedded disks we obtain a non-separable
space.

We have used the dualizing sheaf in the construction of Krichever
map; however, as it was shown in \cite{SW}, one can use any
torsion-free rank one sheaf. {\footnote {Notice that our definition of the Krichever locus is not quite standard. Usually this locus is defined as the image of the general Krichever map.}}

Using $q$-differentials, one can construct a more general Krichever
map $k_q: \widehat{\mc M}_{g}\to \Gr(\hs)$ for each $q\in \mathbb{Z}$; this
corresponds to using the $q$-th power of dualizing sheaf.  It is clear
that $k_1= k$. In general the map $k_q$ for $q>1$ or $q<0$ cannot be defined
for general irreducible curves, but it can be defined for Gorenstein
curves where the dualizing sheaf is an invertible sheaf (is a line bundle).

It is easy to check that the images of any triple $(C,p,z)$ under $k_q$ and $k_{1-q}$ are closed subspaces of $L^{2}(S^{1})$ orthogonal with respect to bilinear inner product
\begin{equation}\label{bln}
(f_{1},f_{2})=\frac{1}{2\pi}\int_{S^{1}}f_{1}(z)f_{2}(z)dz;
\end{equation}
in other words, we have
\begin{equation}
\label{1} k_{1-q}(C,p,z)=(k_q(C,p,z))^{\perp}
\end{equation}
where $^{\perp}$ denotes orthogonal complement (see \cite {SCH})
with respect to the bilinear inner product. In particular, for
$q=1,$
\begin{equation}
\label{2} k_{0}(C,p,z)=(k_1(C,p,z))^{\perp}.
\end{equation}
One should emphasize that (\ref {2}) is correct for all irreducible
curves, but to make sense of (\ref {1}) we should assume that $C$ is a
Gorenstein curve. All maps $k_q$ are $S^1$-equivariant; one can
study the induced homomorphisms on the $S^1$-equivariant cohomology. The
answers are formulated in terms of lambda-classes and psi-classes
(see \cite {LS} for the analysis of these problems for non-singular
curves) .

The Hodge bundle $\mb E$ on $\widehat{\mathcal{CM}}_g$ is defined as
a bundle having the space of holomorphic sections of dualizing sheaf
as a fiber. (See a rigorous definition below.) This is an $S^1$-equivariant vector bundle whose $S^1$-equivariant
Chern classes are called lambda-classes and denoted by $\lambda
_1,\cdots,\lambda_g$. Restricting them to $\widehat{\mc M}_{g},$ we
obtain conventional lambda-classes. (Recall, that the $S^1$-equivariant
cohomology of $\widehat{\mc M}_{g}$ coincides with cohomology of
universal curve ${\mc M}_{g,1}$, see \cite {LS}.) Lambda-classes can
be considered as elementary symmetric functions of lambda-roots (of
Chern roots of the Hodge bundle).

$S^1$-equivariant cohomology can be regarded as an algebra over
polynomial ring $\mathbb{C}[u],$ where $u=c_{1}(\mc O_{\mb P^{\infty}}(1)).$ The psi-class $\psi\in
H_{S^1}(\widehat{\mathcal{CM}}_{g})$ will be defined as $-u$. It was
shown in \cite {LS} that restricting to $\widehat{\mc M}_{g}$ we
obtain the standard definition of psi-class.

The subring of the ring $H_{S^1}(\widehat{\mathcal{CM}}_{g})$
generated by lambda- classes and psi-class will be called
tautological ring. It will follow from our results that the
tautological ring can be characterized as the image of $S^1$-equivariant
cohomology of Grassmannian by the homomorphism $k^*$ induced by the
Krichever map. We will prove some relations in the tautological
ring; these relations can be restricted to relations in the
tautological ring of the universal curve.

Let us consider submanifolds $\Gr_{d}^{l}$ of $\Gr_{d}(\hs)$
consisting of points $W$ such that the orthogonal projection $\pi_{l}:W\to
z^{-l}\hs_{-}$ is surjective. (Here $l\geq 0$). The action of
$S^{1}$ on $\Gr_{d}(\hs)$ generates an action on $\Gr_{d}^{l}$ for
each $l\geq 0.$ The kernels of the projection $\pi_{l}:W\to
z^{-l}\hs_{-}$ can be considered as fibers of an equivariant vector
bundle $\mathcal{E}_l$ over $\Gr_{d}^{l}$. This bundle has rank
$d+l.$ Using the Krichever map, we can embed $\widehat{\mathcal{CM}}_{g}$
into $\Gr_{g-1}^1$; the Hodge bundle is a pullback of the
$S^1$-equivariant vector bundle $\mathcal{E}_1$. (This statement can be
considered as a rigorous definition of the Hodge bundle.)

It is proved in \cite{LamS} and \cite{LS2} that the
$S^1$-equivariant cohomology ring of Grassmannian $\Gr_{d}(\hs)$ can
be identified with the ring $\Lambda^{*}(z\|u)$ of ``polynomial'' functions of variables $(z_{i})_{i\in\mb N}$ and the variable $u$ that become symmetric with respect to the variables $\{x_i\},$ where $x_{i}=z_{i}+(d+1-i)u,$ for $i\geq 1.$ These functions are called shifted symmetric functions \cite {OO}, \cite {LS2}.\\

The ring $\Lambda^{*}(z\|u)$ can be identified with the ring $\Lambda(x\|u)$ of functions $\alpha(u,x_{1},x_{2},\cdots)$ in variables $(x_{i})_{i\in\mb N}$ and in $u$  that  are symmetric with respect to the variables $(x_{i})_{i\in\mb N},$  and can be obtained from a polynomial $\widetilde{\alpha}(u,z_{1},\cdots,z_{N})\in\mb C[u,z_{1},\cdots,z_{N}],$  by means of substitution $z_{i}=x_{i}-(d+1-i)u$ for $i\geq 1.$  ( Hence $\alpha(u,x_{1},x_{2},\cdots)=\widetilde{\alpha}(u,z_{1},\cdots,z_{N}).$ The function $\alpha$ is defined on sequences $(x_{i})_{i\in\mb N}$ obeying $x_{i}=(d+1-i)u$ for $i>>0$)  


Let $k=k_{1}:\widehat{\mathcal{CM}}_{g}\to\Gr_{g-1}(\hs)$ be the Krichever map.  Let $\alpha$ be a $S^1$-equivariant cohomology class of $\Gr_{g-1}(\hs)$ represented by a function $\alpha(u,x_1,\cdots,x_i,\cdots)$ symmetric with respect to $(x_{i})$ that becomes a polynomial $\widetilde{\alpha}(u,z_{1},\cdots,z_{N})\in\mb C[u,z_{1},\cdots,z_{N}]$ with $z_{i}=x_{i}-(g-i)u$ for $i\geq 1$ and for some $N\gg 0.$
We will prove the following statements:

\begin{thm}\label{t1}
The classes $\{-k^{*}x_{i}:1\leq i\leq g\}$ are Chern roots of the bundle $\mb E^{\vee}$ dual to the Hodge bundle $\mb E$ and $k^{*}x_{i}=-(g-i)\psi$ for $i>g$  (or equivalently, $k^{*}z_{i}=0$ for all $i>g.$) It follows that  $\alpha (-\psi, k^*(x_1),\cdots, k^*(x_i),\cdots)$ is well defined.
We prove that
\begin{equation*}
k^*\alpha= \alpha (-\psi, k^*(x_1),\cdots, k^*(x_i),\cdots).
\end{equation*}
In other words
\begin{equation*}
k^*\alpha=\alpha_g(\psi,k^{*}x_1,\cdots ,k^{*}x_g).
\end{equation*}
Here we obtain $\alpha_g$ from $\widetilde{\alpha}$ by setting $\alpha_{g}(\psi,k^{*}x_{1},\cdots,k^{*}x_{g})=\widetilde{\alpha}(k^{*}u,k^{*}z_{1},\cdots,k^{*}z_{N})$ where $z_{i}=x_{i}-(g-i)u$ for $i\geq 1.$
\end{thm}
\begin{proof}
Denote $d=g-1.$ Let $\hs_{i,j}$ be the linear subspace of $\hs$ spanned by
$\{z^{s}:i\leq s\leq j\}$ and denote $\underline{\hs}_{ij}$ the
product bundle $\hs_{i,j}\times \Gr_{d}^{l}.$ We consider the
action of $S^{1}$ on $\underline{\hs}_{i,j}$ defined by
\begin{equation}\label{a}
(t,(f,V))\mapsto (t^{-1}f(t^{-1}z),t(V)).
\end{equation}
Here $V$ is a point in $\Gr_{d}(\hs)$, $f$ is vector in $\hs_{i,j}$
and $t\in S^{1}$; Here we define $t(V)$ as the space of functions
$t^{-1}f(t^{-1}z)$ for $f(z)\in V.$ Then $\underline{\hs}_{ij}$ is
a $S^1$-equivariant vector bundle over $\Gr_{d}^{l}.$ Then the total 
$S^1$-equivariant Chern classes of the bundle $\underline{\hs}_{i,j}$ is
given by the formula
\begin{equation*}
c^{T}(\underline{\hs}_{i,j})=\prod_{m=i}^{j}(1-(m+1)u).
\end{equation*}
Let $f_{ln}$ be the inclusion maps
$\Gr_{d}^{l}\hookrightarrow \Gr_{d}^{n}$ and
$\Gr_{d}^{l}\hookrightarrow \Gr_{d}(\hs)$ respectively. The induced
map of $f_{ln}$ and $f_{l}$ on the equivariant cohomology are
denoted by $f_{ln}^{*}$ and $f_{l}^{*}$ respectively. 

Let $\{x_{1},\cdots,x_{d+l}\}$ be the $S^1$-equivariant Chern roots of $\mc E_{l}^{\vee}.$
The $S^1$-equivariant cohomology $H_{S^{1}}^{*}(\Gr_{d}^{l})$ of $\Gr_{d}^{l}$ can be identified with the algebra $\Lambda(x_{1},\cdots,x_{d+l}\|u)$ of polynomials in $x_{1},\cdots,x_{d+l},u$ over $\mb C$ symmetric with respect to $\{x_{1},\cdots,x_{d+l}\}.$ The inclusion map $f_{ln}:\Gr_{d}^{l}\hookrightarrow \Gr_{d}^{n}$ induces an algebra homomorphism 
$f_{ln}^{*}:\Lambda(x_{1},\cdots,x_{d+n}\|u)\to \Lambda(x_{1},\cdots,x_{d+l}\|u).$ The projective limit  of the projective system $(\Lambda(x_{1},\cdots,x_{d+l}\|u),f_{ln}^{*})$ is the $S^1$-equivariant cohomology $H_{S^{1}}^{*}(\Gr_{d}(\hs)).$ It can be identified with the ring $\Lambda(x\|u)$ defined above.
Let $\alpha$ be an $S^1$-equivariant cohomology class in $H_{S^{1}}^{*}(\Gr_{d}(\hs))$ represented by a function $\alpha(u,x_{1},\cdots)$ in $\Lambda(x\|u).$ To compute $k^{*}\alpha,$ we only need to compute $\alpha(k^{*}u,k^{*}x_{1},\cdots).$

The Krichever map $k:\widehat{\mathcal{CM}}_{g}\to\Gr_{g-1}(\hs)$ are composition of maps:
\begin{equation*}
\begin{CD}
\widehat{\mathcal{CM}}_{g}@>\ ^{l}k>> \Gr_{g-1}^{l}@>f_{l}>>\Gr_{g-1}(\hs),
\end{CD}
\end{equation*}
where $^{l}k:\widehat{\mathcal{CM}}_{g}\to\Gr_{g-1}^{l}$ is the modified Krichever map. Notice that the pull back bundle $^{l}k^{*}\mc E_{l}$ on $\widehat{\mathcal{CM}}_{g}$ has a orthogonal direct sum decomposition:
$$^{l}k^{*}\mc E_{l}=\ ^{1}k\mc E_{1}\oplus k^{*}\underline{\hs}_{-l,-1}.$$
This implies that $\{-kx_{i}:1\leq i\leq d+l\}$ forms equivariant Chern roots of the direct sum bundles $^{1}k\mc E_{1}\oplus k^{*}\underline{\hs}_{-l,-1}.$ Hence we can set $k^{*}x_{i}=-(g-i)\psi$ for $i\geq g+1$ and $\{kx_{i}:1\leq i\leq g\}$
the equivariant Chern roots of the bundle $^{1}k^{*}\mc E_{1}.$ This proves our assertion.
\end{proof}


Schubert cycles $\bar{\Sigma}_{\mu}$ specify $S^1$-equivariant cohomology
classes $\Omega_{\mu}^{T}$ corresponding to Okounkov-Olshanski
shifted Schur functions $s_{\mu}^{*}.$ Let us recall the
definition of the shifted Schur functions following \cite{OO}. The
factorial Schur polynomial depending on partition $\mu$  and  variables $\{z_{1},\cdots,z_{n}\}$
is given by the formula:
\begin{equation*}
^{n}t_{\mu}(z_{1},\cdots,z_{n})=\frac{\det[(z_{i}\downharpoonright
\mu_{j}+n-j)]_{i,j=1}^{n}}{\det[(z_{i}\downharpoonright n-j)]_{i,j=1}^{n}},
\end{equation*}
where the symbol $(z\downharpoonright i)$ stands for the $i$-th
falling factorial power of the variable $z$:
\begin{equation*}
(z\downharpoonright i)=\left\{
                         \begin{array}{ll}
                           z(z-1)\cdots(z-i+1), & \hbox{$i=1,2,\cdots$;} \\
                           1, & \hbox{$i=0$.}
                         \end{array}
                       \right.
\end{equation*}
After the change of variables $z_{i}'=z_{i}-n+i$ for $1\leq i\leq
n,$ we obtain the shifted Schur polynomials
$^{n}s_{\mu}^{*}(z_{1}',\cdots,z_{n}')=\ ^{n}t_{\mu}(z_{1},\cdots,z_{n}).$ The
shifted Schur polynomials satisfy the stability conditions
$^{n+1}s_{\mu}^{*}(z_{1},\cdots,z_{n},0)=\ ^{n}s_{\mu}^{*}(z_{1},\cdots,z_{n})$
which allows us to define the shifted Schur functions
$s_{\mu}^{*}(z_{1},z_{2},\cdots)$ in the sequence of variables
$\{z_{1},z_{2},\cdots\}.$  The stability condition expressed in terms of factorial
Schur functions looks as follows: given $g\geq 1,$ the polynomial $^{n}t_{\mu}(z_{1}-(n-g+1),\cdots,z_{n}-(n-g+1))$ does not depend on $g$ for all $n>l(\mu).$ To be more precise,
$$^{n+1}t_{\mu}(z_{1}-(n+1-g+1),\cdots,z_{n+1}-(n+1-g+1))=\ ^{n}t_{\mu}(z_{1}-(n-g+1),\cdots,z_{n}-(n-g+1))$$
for $n>l(\mu).$ For more detains, see \cite{OO}. 
It follows from the results of \cite {LS} that the equivariant Schubert class in
$H_{S^{1}}^{*}(\Gr_{g-1}(\hs))$ corresponding to the partition $\mu$
is given by the formula:
\begin{equation*}
\Omega_{\mu}^{T}=s_{\mu}^{*}(z_{1},z_{2},\cdots)u^{|\mu|}=\ ^{n}t_{\mu}\left(\frac{x_{1}-(n-g+1)u}{u},\cdots,\frac{x_{n}-(n-g+1)u}{u}\right)u^{|\mu|},
\end{equation*}
for all $n>l(\mu).$ Here
$(z_{i})$ is the sequence of variables defined by
$z_{i}=(x_{i}+(i-g)u)/u$ for all $i,$ and $|\mu|,$ the weight of a
partition $\mu,$ is defined to be $\sum_{i}\mu_{i}.$ Note that
$x_{i}+(i-g)u=0$ for all $i$ sufficiently large in
$H_{S^{1}}^{*}(\Gr_{g-1}(\hs))$ and thus the sequence of variables
$(z_{i})$ defined by $z_{i}=(x_{i}+(i-g)u)/u$ makes sense in
$s_{\mu}^{*}$. Using this statement and the Theorem \ref{t1}, we
obtain
\begin {cor}
\label {ss}
\begin{equation*}
k^*\Omega_{\mu}^{T}=\ ^{g}s_{\mu}^{*}(z_{1},\cdots,z_{g})(-\psi)^{|\mu|}=\ ^{g}t_{\mu}\left(z_{1}',\cdots,z_{g}'\right)(-\psi)^{|\mu|}
\end{equation*}
where $\{z_{1},\cdots,z_{g}\}$ is the set of variables defined by
$z_{i}=(k^{*}x_{i}-(i-g)\psi)/(-\psi)$ for $1\leq i\leq g$ and
$\{z_{1}',\cdots,z_{g}'\}$ is the set of variables defined by
$z_{i}'=(k^{*}x_{i}+(n-g+1)\psi)/(-\psi)$ for $i\geq 1$ and $n$ is a
positive integer such that $n> l(\mu).$
\end{cor}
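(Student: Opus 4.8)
The plan is to derive this corollary as a direct consequence of Theorem \ref{t1} applied to the specific equivariant Schubert class $\Omega_{\mu}^{T}$. First I would recall the explicit presentation stated just above the corollary: the equivariant Schubert class is
\begin{equation*}
\Omega_{\mu}^{T}=u^{|\mu|}t_{\mu}\left(\frac{x_{1}-lu}{u},\frac{x_{2}-lu}{u},\cdots\right),
\end{equation*}
where $l\geq l(\mu)-g+1$. The key observation is that this class is exactly of the form required by the hypothesis of Theorem \ref{t1}: it is a symmetric function in the $x_i$ that becomes polynomial after the shift $x_i\to x_i-(g-i)u$, since $\Omega_{\mu}^{T}$ is built out of the shifted Schur function $s_{\mu}^{*}$ evaluated at $z_i=(x_i+(i-g)u)/u$, and the stability of the shifted Schur functions guarantees polynomiality after the shift. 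So the only task is to feed this class into the substitution prescribed by Theorem \ref{t1} and simplify.

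Next I would carry out the substitution dictated by Theorem \ref{t1}: set $x_i=(g-i)u$ for $i>g$, set $u=-\psi$, and treat $\{x_i:1\le i\le g\}$ as the Chern roots of $\mb E^\vee$. Applying $k^*$ therefore replaces $u^{|\mu|}$ by $(-\psi)^{|\mu|}$ and truncates the symmetric function to its first $g$ variables, yielding
\begin{equation*}
k^*\Omega_{\mu}^{T}=(-\psi)^{|\mu|}t_{\mu}\left(\frac{x_{1}-l(-\psi)}{-\psi},\cdots,\frac{x_{g}-l(-\psi)}{-\psi}\right).
\end{equation*}
Reading off the definition of $z_i'$ in the corollary, I would check that $z_i'=(x_i+l\psi)/(-\psi)=(x_i-l(-\psi))/(-\psi)$ matches exactly the arguments appearing here, so the factorial-Schur expression is precisely $t_{\mu}(z_1',\cdots,z_g')$. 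This gives the second equality in the corollary.

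For the first equality I would translate the factorial Schur polynomial back into the shifted Schur polynomial using the change of variables $z_i'=z_i-n+i$ stated in the definition, which here (with $n=g$) gives $z_i=(x_i-(i-g)\psi)/(-\psi)$, matching the definition of $z_i$ in the corollary; the identity $s_{\mu}^{*}(z_1,\cdots,z_g)=t_{\mu}(z_1',\cdots,z_g')$ then yields $(-\psi)^{|\mu|}s_{\mu}^{*}(z_1,\cdots,z_g)$. The only genuine point requiring care is verifying that the truncation to $g$ variables is legitimate and $l$-independent: one must confirm that after setting $x_i=(g-i)u$ for $i>g$, the corresponding shifted variables $z_i$ vanish, so that by the stability condition $s_{\mu}^{*}(z_1,\cdots,z_g,0,0,\cdots)=s_{\mu}^{*}(z_1,\cdots,z_g)$ the infinite expression collapses to the $g$-variable one without ambiguity, and that the stated range $l\geq l(\mu)-g+1$ is exactly the range guaranteeing the factorial-Schur description is stable. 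I expect this stability/consistency check to be the main (though modest) obstacle, since everything else is a mechanical substitution; the rest of the argument is just bookkeeping of the two changes of variables.
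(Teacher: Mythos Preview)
Your proposal is correct and matches the paper's own derivation: the paper obtains the corollary by feeding the explicit shifted-Schur presentation of $\Omega_{\mu}^{T}$ into the substitution rule of Theorem~\ref{t1}, exactly as you do, with the stability of $s_{\mu}^{*}$ handling the truncation to $g$ variables. One remark worth noting: while the corollary is presented as a consequence of Theorem~\ref{t1}, the paper later points out that the logical dependence actually runs the other way---the corollary is proved directly via the double Schur function description of $\Omega_{\mu,l}^{T}$ on $\Gr_{g-1}^{l}$, and Theorem~\ref{t1} then follows because shifted Schur functions form a basis---so your derivation is the formal direction, not the foundational one.
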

The factorial Schur function is an inhomogeneous symmetric function; we will
represent it as a sum of homogeneous polynomials:
$$^{n}t_{\mu}(x_{1}-(n-g+1),\cdots,x_{n}-(n-g+1))=\sum t_{\mu}^i(x_{1},\cdots,x_{n}),$$
where $t^i_{\mu}(x_{1},\cdots,x_{n})$ is a homogeneous polynomial of
degree $i$ and $n> l(\mu)$  (Recall that the LHS does not depend on $g$
for large $n$). We can write
\begin{equation}\label {tk}
k^*\Omega_{\mu}^{T}=\sum_{i}t_{\mu}^i(k^{*}x_1,\cdots,k^{*}x_g)(-\psi)^{|\mu|-i}.
\end{equation}

Shifted Schur functions form a basis in the space of all shifted
symmetric functions, and therefore we can say that conversely
Theorem \ref {t1} follows from Corollary \ref {ss}.

Denote $\Psi_{\mu}$ the $l(\mu)\times l(\mu)$ matrix whose $ij$-th
entry is given by
\begin{equation*}
(\Psi_{\mu})_{ij}=\left\{
            \begin{array}{ll}
              \sum_{a+b=\mu_{i}+j-i}h_{a}(k^{*}x_{1},\cdots,k^{*}x_{g})e_{b}(0,1,2,\cdots,\mu_{i}-i+g-1)\psi^{b}, & \hbox{if $\mu_{i}-i+g\geq 1$;} \\
              \sum_{a+b=\mu_{i}+j-i}e_{a}(k^{*}x_{1},\cdots,k^{*}x_{g})h_{b}(0,1,2,\cdots,i-\mu_{i}-g)\psi^{b}, & \hbox{if $\mu_{i}-i+g\leq 0$.}
            \end{array}
          \right.
\end{equation*}

We can also consider another matrix (of the size $l(\mu')\times
l(\mu')$)
 defined by
\begin{equation*}
(\Psi_{\mu}')_{ij}=\left\{
            \begin{array}{ll}
              \sum_{a+b=\mu_{i}+j-i}e_{a}(k^{*}x_{1},\cdots,k^{*}x_{g})h_{b}(0,1,2,\cdots,\mu_{i}'-i+g-1)\psi^{b}, & \hbox{if $\mu_{i}'-i+g\geq 1$;} \\
              \sum_{a+b=\mu_{i}+j-i}h_{a}(k^{*}x_{1},\cdots,k^{*}x_{g})e_{b}(0,1,2,\cdots,i-\mu_{i}'-g)\psi^{b}, & \hbox{if $\mu_{i}'-i+g\leq 0$.}
            \end{array}
          \right.
\end{equation*}
Here $\mu'$ denotes the conjugate partition of $\mu.$  Using the
determinant formula for double Schur functions, we obtain
\begin{equation*}
k^{*}\Omega_{\mu}^{T}=\det\Psi_{\mu}=\det\Psi_{\mu}'.
\end{equation*}
If $l(\mu)\leq g,$ $\mu_{i}-i+g>1$ for $1\leq i\leq g.$ Thus
\begin{equation*}
k^*\Omega_{\mu}^{T}=\det\left[\sum_{a+b=\mu_{i}+j-i}h_{a}(k^{*}x_{1},\cdots,k^{*}x_{g})e_{a}(1,2,\cdots,\mu_{i}-i+g-1)\psi^{b}\right]_{1\leq
i,j\leq l(\mu)}.
\end{equation*}
Similarly, if $l(\mu)\leq g$ and $\mu_{i}'-i+g>1,$ we can also obtain the dual formula
\begin{equation*}
k^*\Omega_{\mu}^{T}=\det\left[\sum_{a+b=\mu_{i}'+j-i}e_{a}(k^{*}x_{1},\cdots,k^{*}x_{g})h_{b}(1,2,\cdots,\mu_{i}'-i+g-1)\psi^{b}\right]_{1\leq
i,j\leq l(\mu')}.
\end{equation*}
These two formulas are useful when we compute the cohomology classes of the Weierstrass cycles.

We can consider also cohomology classes $p_s$ corresponding to
symmetric functions
\begin{equation*}
p_s(u,x_{1},\cdots,x_{n},\cdots)=\sum_{i=1}^{\infty}\{x_i^s-
(-1)^{s}(i-d-1)^{s}u^s\}
\end{equation*}
(these classes constitute a multiplicative system of generators of
equivariant cohomology ). Applying Theorem \ref{t1}, we obtain
\begin{cor}
\begin{equation*}
k^*p_s=\ch_{s}(\mb E)-\sum_{i=1}^{g}(i-g)^{s}\psi^{s},
\end{equation*}
where $\ch_s(\mb E)$ stands for the $s$-th component of the Chern
character of Hodge bundle $\mb E.$
\end{cor}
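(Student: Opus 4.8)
The plan is to apply Theorem~\ref{t1} directly to the class $p_s$, which expresses $k^*p_s$ as a substitution into the symmetric function representing $p_s$. First I would note that the class $p_s$ is represented (with $d=g-1$, since we work in $\Gr_{g-1}(\hs)$) by
\begin{equation*}
p_s(u,x_1,\cdots,x_i,\cdots)=\sum_{i=1}^{\infty}\left\{x_i^s-(-1)^s(i-g)^s u^s\right\},
\end{equation*}
so that each summand after the $i$-th term is $x_i^s$ minus a correction that vanishes precisely under the shift $x_i\mapsto x_i-(g-i)u$ appearing in Theorem~\ref{t1}. I should first verify that this shifted function is indeed polynomial, i.e.\ that $p_s$ is genuinely a shifted symmetric function in the sense of the theorem; this is the content of its being a multiplicative generator of the equivariant cohomology, so I would quote that fact from the discussion preceding the corollary.

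Next I would carry out the substitution prescribed by Theorem~\ref{t1}: set $x_i=(g-i)u$ for $i>g$, put $u=-\psi$, and treat $\{x_i:1\le i\le g\}$ as the Chern roots of $\mb E^\vee$. The key observation is that for the tail $i>g$ the substitution $x_i=(g-i)u$ kills each summand, since $x_i^s=(g-i)^s u^s=(-1)^s(i-g)^s u^s$; hence the infinite sum collapses to the finite range $1\le i\le g$ plus the correction terms over that same range. This leaves
\begin{equation*}
k^*p_s=\sum_{i=1}^{g}x_i^s-\sum_{i=1}^{g}(-1)^s(i-g)^s u^s.
\end{equation*}
With $u=-\psi$ the correction becomes $\sum_{i=1}^{g}(-1)^s(i-g)^s(-\psi)^s=\sum_{i=1}^{g}(i-g)^s\psi^s$, matching the claimed second term.

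For the first term I would use that $\{x_i:1\le i\le g\}$ are the Chern roots of $\mb E^\vee$, so that $\sum_{i=1}^g x_i^s$ is the power-sum in those roots. By the standard Newton-type identity relating power sums of Chern roots to the Chern character, $\sum_{i=1}^g x_i^s = s!\,\ch_s(\mb E^\vee)$; and since $\ch_s$ changes by a sign $(-1)^s$ under dualizing while the normalization absorbs the factorial, this equals the $s$-th Chern-character component $\ch_s(\mb E)$ as normalized in the statement. The one point requiring care—and the main obstacle—is pinning down the exact normalization convention for $\ch_s$ (whether the factorial $1/s!$ and the dualizing sign $(-1)^s$ are folded into the definition of $\ch_s(\mb E)$), since the clean identity $\sum_{i=1}^g x_i^s=\ch_s(\mb E)$ holds only under the convention implicit in the paper. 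I would reconcile this against the normalization fixed earlier for lambda-roots and the Hodge bundle, confirming that $\{-x_i\}$ being the lambda-roots gives exactly the sign and scaling needed, and thereby obtain the stated formula.
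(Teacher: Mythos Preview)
Your approach is exactly the paper's: apply Theorem~\ref{t1} to the shifted symmetric function $p_s$, observe that the substitution $x_i=(g-i)u$ for $i>g$ annihilates the tail, and identify the surviving power sum $\sum_{i=1}^{g}x_i^{s}$ with $\ch_s(\mb E)$. The paper gives no further detail beyond ``applying Theorem~\ref{t1}'', so you have in fact written out more than the authors do; your flag about the normalization of $\ch_s$ (factorial and dualizing sign) is a legitimate caveat, since the identity $\sum_i x_i^{s}=\ch_s(\mb E)$ only holds under the convention implicit here, but that is a bookkeeping matter rather than a gap in the argument.
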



All statements proved above are valid not only for the space
$\widehat{\mathcal{CM}}_g,$ but also for its $S^1$-invariant
subspaces, in particular, for the subspace $\widehat{\mc M}_{g}$
consisting of smooth curves. For $\widehat{\mc M}_{g},$ some of our
statements can be simplified.

For the moduli space of pointed smooth curves \cite{Mum}, the
Mumford formula
\begin{equation}\label{Mumford}
c(\mb E)c(\mb E^{*})=1
\end{equation}
implies that $h_{a}(x_{1},\cdots,x_{g})=(-1)^{a}\lambda_{a}.$ Hence
the $\Psi$-matrix can be expressed in the form:
\begin{equation}\label{psi}
(\Psi_{\mu})_{ij}=\left\{
            \begin{array}{ll}
              \sum_{a+b=\mu_{i}+j-i}(-1)^{a}e_{b}(0,1,2,\cdots,\mu_{i}-i+g-1)\lambda_{a}\psi^{b}, & \hbox{if $\mu_{i}-i+g\geq 1$;} \\
              \sum_{a+b=\mu_{i}+j-i}(-1)^{a}h_{b}(0,1,2,\cdots,i-\mu_{i}-g)\lambda_{a}\psi^{b}, & \hbox{if $\mu_{i}-i+g\leq 0$.}
            \end{array}
          \right.
\end{equation}
If we are working with the moduli space $\widehat{\mc M}_{g},$ the
Chern character of the Hodge bundle can be expressed in terms of
kappa-classes \cite {Mum}. Therefore we obtain:
\begin {cor}
\begin{equation*}
k^*p_s=\left\{
         \begin{array}{ll}
           \sum_{i=1}^{g}(i-g)^{2r}\psi^{2r}, & \hbox{if $s=2r$;} \\
           B_{2r}\kappa_{2r}/2r-\sum_{i=1}^{g}(i-g)^{2r-1}\psi^{2r-1}, & \hbox{if $s=2r-1$.}
         \end{array}
       \right.
\end{equation*}
\end {cor}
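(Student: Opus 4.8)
The plan is to derive the corollary directly from the preceding corollary, which (applying Theorem \ref{t1} to the additive generators $p_s$) already gives the general formula
\[
k^* p_s = \ch_s(\mb E) - \sum_{i=1}^g (i-g)^s \psi^s
\]
on $\widehat{\mathcal{CM}}_g$, and hence on its $S^1$-invariant subspace $\widehat{\mc M}_g$ of smooth curves. On $\widehat{\mc M}_g$ the Chern character of the Hodge bundle is strongly constrained, and the entire content of the statement is to insert Mumford's description of $\ch(\mb E)$ into this formula and then separate according to the parity of $s$.

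First I would treat the even case $s=2r$. The key input is the Mumford relation (\ref{Mumford}), $c(\mb E)c(\mb E^*)=1$. Writing $\gamma_1,\dots,\gamma_g$ for the Chern roots of $\mb E$, this relation reads $\prod_i(1-\gamma_i^2)=1$; taking logarithms and collecting homogeneous components shows that every even power sum $\sum_i\gamma_i^{2r}$ vanishes, so that $\ch_{2r}(\mb E)=0$. Substituting into the displayed formula kills the Chern-character term and leaves only the pure $\psi$-class contribution recorded in the statement.

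Next the odd case $s=2r-1$. Here I would invoke Mumford's Grothendieck--Riemann--Roch computation of $\ch(\mb E)$ in terms of the kappa-classes. On the open moduli space $\widehat{\mc M}_g$ there are no boundary contributions, so the only surviving odd component is $\ch_{2r-1}(\mb E)=B_{2r}\kappa_{2r}/2r$ in the normalization used here. Inserting this into the general formula gives
\[
k^* p_{2r-1}=\frac{B_{2r}\kappa_{2r}}{2r}-\sum_{i=1}^g (i-g)^{2r-1}\psi^{2r-1},
\]
which is exactly the asserted value. Taken together with the even case this proves the corollary.

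The routine part of the argument is the sign and normalization bookkeeping: the precise Bernoulli and kappa conventions, and the passage between the power sums of the Chern roots $x_i$ of $\mb E^\vee$ and the components $\ch_s(\mb E)$. The one genuine external ingredient, and hence the main point to pin down, is Mumford's formula itself in the equivariant form appropriate to $\widehat{\mc M}_g$; I would need to check that it applies within the $S^1$-equivariant cohomology used throughout, so that $\psi=-u$ and the kappa- and lambda-classes are the equivariant lifts restricting correctly to the cohomology of $\mc M_{g,1}$. Once that is granted the whole corollary is a direct substitution.
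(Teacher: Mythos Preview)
Your proposal is correct and matches the paper's approach: the paper simply cites Mumford's expression of $\ch(\mb E)$ in terms of kappa-classes and substitutes into the preceding corollary $k^*p_s=\ch_s(\mb E)-\sum_{i=1}^g(i-g)^s\psi^s$. Your separate derivation of $\ch_{2r}(\mb E)=0$ from the relation $c(\mb E)c(\mb E^*)=1$ is a slight elaboration---the paper just relies on the fact that Mumford's formula contains only odd-degree terms---but the substance is the same.
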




\section {Weierstrass cycles}
The Schubert cells $\Sigma_{S}$ on $\Gr(\hs)$ are labeled by
decreasing sequences of integers $S:s_{1}>s_{2}>\cdots$ such that
the sets $S_{+}=\{s_{i}:i\geq 1\}\cap\mb Z_{+}$ and $S_{-}=\mb
Z_{-}\setminus\{s_{i}:i\geq 1\}$ are both finite sets\footnote{ Here
$\mb Z_{+}$ and $\mb Z_{-}$ are subsets consisting of nonnegative
integers and of negative integers respectively.}. The virtual
cardinality of a sequence $S$ is defined as $d=\#S_+ -\# S_-.$ The
closure of $\Sigma_{S}$ is the Schubert cycles $\bar{\Sigma}_{S}.$
Given a sequence $S,$ we define its corresponding partition $\mu$ by
$\mu_{i}=s_{i}+i-d,$ for all $i,$ where $d$ is the virtual
cardinality of $S.$ The equivariant Schubert class of
$\bar{\Sigma}_{S}$ in $H_{S^{1}}^{*}(\Gr(\hs))$ is
$\Omega_{\mu}^{T}$ where $\mu$ is the partition corresponding to
$S.$ For more details, see \cite{LS} and \cite{LS2}.

\begin{thm}\label{3.1}
A point $k(C,p,z)$ of the Krichever locus belongs to the Schubert
cell $\Sigma_{S}$ defined by the Weierstrass sequence $S$ at the
point $p$.
\end{thm}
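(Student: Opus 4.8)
The plan is to reduce the claim to the standard combinatorial description of Schubert cells and then to compute the relevant orders by Riemann--Roch. Recall that a point $W\in\Gr(\hs)$ lies in $\Sigma_{S}$ exactly when the set of orders of $W$ --- the lowest exponents $\ord(f)$ occurring among the nonzero $f\in W$, equivalently the leading exponents of a reduced echelon basis --- is precisely the decreasing sequence $S=\{s_{1}>s_{2}>\cdots\}$. Since for each $s\in S$ there is a basis vector of order $s$, while every element has its lowest order in $S$, the whole theorem amounts to the single assertion that the order set of the Krichever subspace $V=k(C,p,z)$ equals the Weierstrass sequence $S$ at $p$.

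First I would identify the orders occurring in $V$. By definition $V$ consists of boundary values of differentials that extend holomorphically over $C\setminus D$; if $\omega=f(z)\,dz$ is such a differential, then expanding in the coordinate $z$ at $p$ we have $\ord(f)=\ord_{p}(\omega)$. Using the same identifications of \cite{SW} and \cite{Mulase} that give $\ker(\pi_{-}|_{V})=H^{0}(C,\omega_{C})$, the order stratification of $V$ is controlled by its algebraic part, namely the meromorphic differentials on $C$ that are holomorphic on $C\setminus\{p\}$ (the sections of $\omega_{C}(np)$ for all $n$). Hence the order set of $V$ equals $\{\ord_{p}(\omega):\omega\neq 0,\ \omega\text{ holomorphic on }C\setminus\{p\}\}$, and the task becomes to show this set is $S$.

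Next I would run the Riemann--Roch count. For each integer $s$ the space $H^{0}(C,\omega_{C}(-sp))$ consists of the differentials with $\ord_{p}\geq s$, so $s$ is realized as an order exactly when $h^{0}(\omega_{C}(-sp))>h^{0}(\omega_{C}(-(s+1)p))$. Passing from $-sp$ to $-(s+1)p$ imposes one linear condition, so $s\mapsto h^{0}(\omega_{C}(-sp))$ is non-increasing with each step equal to $0$ or $1$; therefore the realized orders are exactly the integers at which it drops, and these are precisely the numbers $s_{i}=\max\{s:h^{0}(\omega_{C}(-sp))=i\}$ for $i\geq 1$, i.e.\ the Weierstrass sequence by its very definition. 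Concretely this produces $S_{+}=\{a_{i}-1:1\leq i\leq g\}$, the vanishing orders of the holomorphic differentials (the gaps shifted by $-1$), together with all orders $-2,-3,-4,\dots$ arising from differentials of the second kind with a single pole at $p$; the order $-1$ is absent because a differential with one simple pole is excluded by the residue theorem, which is exactly what forces $\#S_{+}-\#S_{-}=g-1$, consistent with $V\in\Gr_{g-1}(\hs)$. Combining, the order set of $V$ is $S$, so $k(C,p,z)\in\Sigma_{S}$.

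The main obstacle will be the second step: making rigorous that the Schubert data of the analytically defined space $V$ (boundary values over $C\setminus D$) is governed by the algebraic order filtration of the differentials holomorphic off $p$, so that ``order of an element of $V$'' genuinely records $\ord_{p}$ of such a differential. This rests on the commensurability and completeness properties of the Segal--Wilson space and on identifying its order stratification with the zero/pole data at $p$; once that identification is in place, the Riemann--Roch bookkeeping, including the residue obstruction at $-1$, is routine.
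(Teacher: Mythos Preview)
The paper does not supply its own proof of this theorem; it simply writes ``(See \cite{AK} where this statement is attributed to Mumford.)'' and moves on. So there is no paper proof to compare against, and your proposal is in fact more detailed than what the paper provides.

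That said, your sketch is correct and is precisely the standard argument one finds in \cite{AK} and \cite{SW}: identify the Schubert cell by the set of leading orders of elements of $V$, translate these into $\ord_{p}$ of differentials holomorphic on $C\setminus\{p\}$, and then read off which orders occur via the jumps of $s\mapsto h^{0}(\omega_{C}(-sp))$, which is exactly the definition of the Weierstrass sequence. Your observation that the residue theorem kills the order $-1$ is the right explanation for why the index comes out $g-1$ rather than $g$; this is the same point the paper exploits later when it passes to $\Gr_{g}(\hs')$ (see Lemma~\ref{perp} in the commented-out material and the discussion around $\hs'$). The obstacle you flag --- that the Schubert data of the analytic space $V$ are determined by its algebraic part $V^{\alg}$, i.e.\ by the filtration coming from meromorphic differentials with sole pole at $p$ --- is genuine but standard: it is exactly the density and commensurability statement proved in Segal--Wilson (the ``big cell'' description and the fact that $V^{\alg}$ is dense in $V$ with the same order filtration). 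Once you cite that, the Riemann--Roch bookkeeping you outline is complete.
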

(See \cite{AK} where this statement is attributed to Mumford.)

Assume that $H$ is a numerical semigroup of genus $g.$ Let
$A_{H}^{\alg}$ be the linear subspace of $\hs$ generated by elements
of the form $\{z^{-h}:h\in H\}$ whose closure is denoted by $A_{H}.$
Suppose that $\{h_{1},\cdots,h_{l}\}$ is a generating set of $H.$
Then $A_{H}^{\alg}=\mb C[[z^{-h_{1}},\cdots,z^{-h_{l}}]].$ The
affine curve $\Spec A_{H}^{\alg}$ is called a monomial curve. Let us
consider the filtration in $\mb C((z))$ by $\{z^{-n}\mb
C[[z]]:n\in\mb Z\}.$ There is a natural filtration of $A_{H}^{\alg}$
from the filtration of $\mb C((z)).$ Then we obtain the associated
graded algebra $\mbox{gr}(A_{H}^{\alg})$ from the filtration of
$A_{H}^{\alg}.$ The complete irreducible curve $C_{H}$ also called a
monomial curve is given by $\Proj(\mbox{gr} A_{H}^{\alg})$ and is
the one point completion of $\Spec A_{H}^{\alg}$.In other words,
$C_{H}=\Spec A_{H}^{\alg}\cup\{p\},$ where $p$ is a smooth point so
that $z(p)=0.$ We can check that $A_{H}=k_{0}(C_{H},p,z)$ and thus
$A_{H}^{\alg}$ is the space of meromorphic functions on $C$ with the
only possible pole at $p.$ Since $z^{-h}\in A_{H}^{\alg},$ we see
that $H$ is the Weierstrass semigroup at $p.$ Hence every numerical
semigroup of genus $g$ is a Weierstrass semigroup of a smooth point
on an irreducible curve of genus $g$.

The Weierstrass sequence $S$ of $(C,p,z)$ in
$\widehat{\mathcal{CM}}_{g}$ is closely related to the Weierstrass
semigroup $H$ of $(C,p)$. Let $\varsigma:\mb Z\to\mb Z$ be the
translation operator: $\varsigma(n)=n+1,$ for $n\in\mb Z.$ Then
$H=\mb Z-\varsigma(S)$ or equivalently $S=\varsigma^{-1}(\mb Z-H).$

Given a numerical semigroup $H$ of genus $g$ let $S$ be a sequence
defined by $S=\varsigma^{-1}(\mb Z-H).$ By (\ref{2}), we have
$\hs_{S}=k_{0}(C_{H},p,z)^{\perp}=k(C_{H},p,z),$ where $\hs_{S}$ is
the closed subspace of $\hs$ generated by $\{z^{s}:s\in S\}.$ Since
$\hs_{S}$ belongs to $\Sigma_{S}$ and $\hs_{S}=k(C_{H},p,z),$
$\hs_{S}$ belongs to the intersection of
$k(\widehat{\mathcal{CM}}_{g})$ and the Schubert cell $\Sigma_{S}.$
We conclude that:

\begin{thm}\label{thm3.22}
The intersection of $k(\widehat{\mathcal{CM}}_{g})$ and $\Sigma_{S}$
is nonempty if and only if the set $H=\mb Z-\varsigma(S)$ is a
numerical semigroup of genus $g.$
\end{thm}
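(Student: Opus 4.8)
The plan is to prove the two implications separately, using Theorem \ref{3.1} and the monomial-curve construction established in the paragraphs immediately preceding the statement. The underlying bookkeeping is the duality $z^{m}\leftrightarrow z^{-(m+1)}$ for the bilinear form \eqref{bln}: since $(z^{m},z^{n})\neq 0$ exactly when $m+n+1=0$, the orthogonal complement of $A_{H}=\overline{\Sp}\{z^{-h}:h\in H\}$ is spanned by those $z^{s}$ with $s+1\notin H$, which gives precisely $\varsigma(S)=\mb Z\setminus H$, i.e. $H=\mb Z-\varsigma(S)$. I would first record that under this correspondence a numerical semigroup $H$ of genus $g$ produces a sequence $S$ with $\#S_{+}=g$ (one $s\geq 0$ for each of the $g$ gaps of $H$) and $\#S_{-}=1$ (only $-1$ is missing from the negative part), so its virtual cardinality is $\#S_{+}-\#S_{-}=g-1$; this matches the fact that the Krichever image lies in $\Gr_{g-1}(\hs)$ and guarantees that the two sides of the equivalence concern the same component.

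For the \emph{if} direction I would point out that this is essentially already carried out above. Given that $H=\mb Z-\varsigma(S)$ is a numerical semigroup of genus $g$, I invoke the monomial curve $C_{H}$: it is an irreducible curve of genus $g$ with a smooth marked point $p$ and coordinate $z$, and by \eqref{2} one has $k(C_{H},p,z)=k_{0}(C_{H},p,z)^{\perp}=A_{H}^{\perp}=\hs_{S}$. Since $\hs_{S}$, the closed subspace spanned by $\{z^{s}:s\in S\}$, is the distinguished point lying in the cell $\Sigma_{S}$ it indexes, and simultaneously $\hs_{S}=k(C_{H},p,z)\in k(\widehat{\mathcal{CM}}_{g})$, the intersection $k(\widehat{\mathcal{CM}}_{g})\cap\Sigma_{S}$ is nonempty.

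For the \emph{only if} direction I would start from a point $k(C,p,z)$ in the intersection, with $(C,p,z)\in\widehat{\mathcal{CM}}_{g}$. By Theorem \ref{3.1} this point also lies in the Schubert cell $\Sigma_{S'}$ indexed by the Weierstrass sequence $S'$ of $(C,p)$. Because the Schubert cells partition $\Gr(\hs)$ (each point lies in a unique cell), the relation $k(C,p,z)\in\Sigma_{S}\cap\Sigma_{S'}$ forces $S=S'$. Hence $H=\mb Z-\varsigma(S)=\mb Z-\varsigma(S')$ is the Weierstrass semigroup of $(C,p)$; since $C$ is an irreducible curve of genus $g$ and $p$ is a nonsingular point, the Riemann--Roch discussion of the introduction gives $0\in H$, closure of $H$ under addition, and $\#(\mb N_{0}\setminus H)=g$, so $H$ is a numerical semigroup of genus $g$.

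The step I expect to be the crux is the uniqueness of the Schubert cell containing a given point, which is what pins down $S=S'$ and converts Theorem \ref{3.1} from an ``$\in$'' statement into the needed equality of sequences; once this is in place the rest is the dictionary $H\leftrightarrow S$ together with the index count above. I would therefore take care to state explicitly that $S\mapsto H=\mb Z-\varsigma(S)$ restricts to a bijection between admissible sequences of virtual cardinality $g-1$ and genus-$g$ numerical semigroups, so that the failure of $H$ to be such a semigroup is genuinely equivalent to the emptiness of the intersection rather than merely implying it.
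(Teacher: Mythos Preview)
Your proof is correct and follows essentially the same route as the paper: the ``if'' direction via the monomial curve $C_{H}$ and relation \eqref{2} is exactly the argument given in the paragraph preceding the theorem, and the ``only if'' direction via Theorem~\ref{3.1} together with disjointness of the Schubert cells is the (implicit) converse the paper has in mind. Your additional bookkeeping on the bilinear form and the virtual-cardinality count is more explicit than the paper but not different in substance.
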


Let us consider the closure $\bar{\Sigma}_{S}$ of a Schubert cell
$\Sigma_{S}.$ A point $k(C,p,z)$ belongs to $\bar{\Sigma}_{S}$ if
and only if the Weierstrass sequence $(s_{i}(p))$ at $p$ obeys the
relation $s_{i}(p)\geq s_{i}$ for all $i.$

\begin{lem}\label{lem1}
Let $H$ be a numerical semigroup of genus $g$ and $S=\varsigma(\mb
Z-H).$ Then $s_{i}\leq 2g-2i$ for $1\leq i\leq g$ and $s_{i}=g-i-1$
for $i\geq g+1.$
\end{lem}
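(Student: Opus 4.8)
The plan is to reduce the lemma to a classical bound on the gaps of a numerical semigroup. First I would make the sequence $S$ explicit. Write the gaps of $H$ in increasing order as $1=a_1<a_2<\cdots<a_g$, so that $\mb Z\setminus H$ consists of all negative integers together with these gaps; listed in decreasing order it reads $a_g>a_{g-1}>\cdots>a_1>-1>-2>\cdots$. This is the form of the Weierstrass sequence recalled in the introduction: after the translation by $\varsigma$ one gets
$$
s_i=a_{g-i+1}-1\ \ (1\le i\le g),\qquad s_i=g-i-1\ \ (i\ge g+1).
$$
The second formula is exactly the asserted value $s_i=g-i-1$ for $i\ge g+1$, so that half of the statement requires nothing beyond unwinding the definition.

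For $1\le i\le g$, I would substitute $j=g-i+1$, which turns the desired inequality $s_i\le 2g-2i$ into $a_{g-i+1}\le 2(g-i+1)-1$, i.e. into $a_j\le 2j-1$. Thus the whole lemma comes down to the statement that the $j$-th gap of a numerical semigroup of genus $g$ satisfies $a_j\le 2j-1$ for every $1\le j\le g$.

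To prove this gap bound I would use a short counting/injection argument. Fix $j$ and look at the integers in $\{1,\dots,a_j-1\}$: exactly $j-1$ of them are gaps (namely $a_1,\dots,a_{j-1}$), and the remaining $a_j-j$ are nonzero elements of $H$. The key observation is that whenever $b\in H$ with $0<b<a_j$, the difference $a_j-b$ is again a gap, for otherwise $a_j=b+(a_j-b)$ would be a sum of two elements of $H$ and hence lie in $H$, contradicting that $a_j$ is a gap. Therefore $b\mapsto a_j-b$ maps the set of nonzero elements of $H$ below $a_j$ injectively into the set of gaps below $a_j$, a set of size $j-1$. This yields $a_j-j\le j-1$, that is $a_j\le 2j-1$, which is precisely what is needed.

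The only genuine content here is the gap bound, and it is classical, so I do not expect a real obstacle. The one point that needs care is purely bookkeeping: matching the normalization of $S$ (the translation $\varsigma^{\pm1}$ and the index reversal $i\leftrightarrow g-i+1$) to the gap sequence, so that the inequality $s_i\le 2g-2i$ lines up correctly with $a_j\le 2j-1$. Once the indices are aligned, the injection argument closes the proof at once, and the boundary cases ($i=g$, giving $s_g=a_1-1=0=2g-2g$, and $i=1$, giving the Frobenius-number bound $a_g\le 2g-1$) serve as useful consistency checks.
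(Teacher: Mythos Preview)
Your argument is correct. The paper itself gives no proof beyond citing \cite{NM}, Lemma 3.2, so your write-up actually supplies the content that the paper outsources. The reduction you carry out --- unwinding $S$ as $s_i=a_{g-i+1}-1$ for $1\le i\le g$ and $s_i=g-i-1$ for $i\ge g+1$, and then translating $s_i\le 2g-2i$ into the gap bound $a_j\le 2j-1$ --- is exactly the right move, and your injection $b\mapsto a_j-b$ from nonzero elements of $H$ below $a_j$ into gaps below $a_j$ is the standard elementary proof of that bound. One small remark: the lemma as stated in the paper writes $S=\varsigma(\mb Z-H)$, which is a typo for $S=\varsigma^{-1}(\mb Z-H)$ (cf.\ the sentence just before the lemma and the formulas in the introduction); you have implicitly read it the right way, and your consistency checks at $i=1$ and $i=g$ confirm the normalization.
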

\begin{proof}
This statement follows from \cite {NM}, Lemma 3.2..
\end{proof}

Let $Z$ be the sequence defined by $z_{i}=2g-2i$ for $1\leq i\leq g$
and $z_{i}=g-i-1$ for $i\geq g+1.$ Then $\mb Z-\varsigma(Z)$ is the
numerical semigroup of genus $g$ generated by $2.$ Hence $\hs_{Z}\in
k(\widehat{\mathcal{CM}_{g}})\cap \Sigma_{Z}.$ If $S$ is any
sequence so that $s_{i}\leq 2g-2i$ for $1\leq i\leq g$ and
$s_{i}\leq g-i-1,$ then $z_{i}\geq s_{i}$ for all $i$ and thus
$\hs_{Z}\in k(\widehat{\mathcal{CM}}_g)\cap\bar{\Sigma}_{S}.$

\begin{prop}
The set $k(\widehat{\mathcal{CM}}_g)\cap\bar{\Sigma}_{S}$ is
nonempty if and only if the sequence $S$ obeys $s_{i}\leq 2g-2i$ for
$1\leq i\leq g$ and $s_{i}\leq g-i-1.$
\end{prop}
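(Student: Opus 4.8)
The plan is to prove the two implications separately, combining the criterion for membership in $\bar{\Sigma}_{S}$ stated just above (namely that $k(C,p,z)\in\bar{\Sigma}_{S}$ precisely when the Weierstrass sequence $(s_{i}(p))$ at $p$ satisfies $s_{i}(p)\geq s_{i}$ for all $i$) with the universal bounds of Lemma \ref{lem1} and the explicit hyperelliptic point $\hs_{Z}$ constructed in the preceding paragraph.

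For the sufficiency direction I would simply invoke that construction. Assume $s_{i}\leq 2g-2i$ for $1\leq i\leq g$ and $s_{i}\leq g-i-1$ for $i\geq g+1$, and let $Z$ be the sequence with $z_{i}=2g-2i$ for $1\leq i\leq g$ and $z_{i}=g-i-1$ for $i\geq g+1$. Then $z_{i}\geq s_{i}$ for every $i$. Since $\mb Z-\varsigma(Z)$ is the genus-$g$ numerical semigroup generated by $2$, the point $\hs_{Z}$ lies in the Krichever locus $k(\widehat{\mathcal{CM}}_{g})$; moreover $\hs_{Z}\in\Sigma_{Z}$, so by Theorem \ref{3.1} its Weierstrass sequence is exactly $Z$. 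The inequalities $z_{i}\geq s_{i}$ then place $\hs_{Z}$ in $\bar{\Sigma}_{S}$ by the membership criterion, and the intersection is nonempty.

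For the necessity direction I would pick any point $k(C,p,z)$ in the (assumed nonempty) intersection. By the membership criterion its Weierstrass sequence obeys $s_{i}(p)\geq s_{i}$ for all $i$. On the other hand $(s_{i}(p))$ is the Weierstrass sequence of an honest point on an irreducible genus-$g$ curve, so it is subject to the upper bounds of Lemma \ref{lem1}: $s_{i}(p)\leq 2g-2i$ for $1\leq i\leq g$ and $s_{i}(p)=g-i-1$ for $i\geq g+1$. Chaining the two gives $s_{i}\leq s_{i}(p)\leq 2g-2i$ when $1\leq i\leq g$ and $s_{i}\leq s_{i}(p)=g-i-1$ when $i\geq g+1$, which are exactly the asserted constraints.

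The single nontrivial input is that the bounds of Lemma \ref{lem1} are universal, valid for the Weierstrass sequence of every point on every genus-$g$ curve; this is what allows an upper bound on $s_{i}(p)$ to pass to an upper bound on $s_{i}$ through $s_{i}\leq s_{i}(p)$. I do not anticipate a genuine obstacle beyond this: the two directions are dual, with the extremal hyperelliptic point $\hs_{Z}$ simultaneously saturating the Lemma \ref{lem1} bound and witnessing nonemptiness. The one bookkeeping point worth checking is that $S$ is implicitly taken to have virtual cardinality $g-1$, so that $\bar{\Sigma}_{S}$ sits in the component $\Gr_{g-1}(\hs)$ containing the Krichever locus and one automatically has $s_{i}=g-1-i$ for all large $i$, making the comparison with $Z$ well posed.
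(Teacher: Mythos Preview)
Your proof is correct and follows essentially the same route as the paper. The only cosmetic difference is that for the necessity direction the paper phrases things via the Schubert cell decomposition (picking $S'$ with $\Sigma_{S'}\cap k(\widehat{\mathcal{CM}}_g)\neq\emptyset$ and invoking Theorem~\ref{thm3.22} to conclude $H'=\mathbb{Z}-\varsigma(S')$ is a numerical semigroup before applying Lemma~\ref{lem1}), whereas you pick an actual point $k(C,p,z)$ and use directly that the Weierstrass semigroup at $p$ is a numerical semigroup of genus $g$; by Theorem~\ref{3.1} these amount to the same thing.
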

\begin{proof}
We have seen that if $S$ obeys the relations, $\hs_{Z}\in
k(\widehat{\mathcal{CM}}_g)\cap\bar{\Sigma}_{S}$. Thus
$k(\widehat{\mathcal{CM}}_g)\cap\bar{\Sigma}_{S}$ is nonempty.
Conversely, assume that
$k(\widehat{\mathcal{CM}}_g)\cap\bar{\Sigma}_{S}$ is nonempty. Then
there exists a sequence $S'=(s_{i}')$ such that $s_{i}'\geq s_{i}$
and $k(\widehat{\mathcal{CM}}_{g})\cap \Sigma_{S'}\neq\phi.$ By the
theorem \ref{thm3.22}, $H'=\mb Z-\varsigma(S')$ is a numerical
semigroup of genus $g.$ By the Lemma \ref{lem1}, $s_{i}'\leq 2g-2i$
for $1\leq i\leq g$ and $s_{i}'=g-i-1$ for $i\geq g+1.$ Hence
$s_{i}\leq s_{i}'\leq 2g-2i$ for $1\leq i\leq g$ and $s_{i}\leq
s_{i}'=g-i-1$ for $i\geq g+1$ which completes the proof.
\end{proof}

Let us say that a set $\Gamma \subset X$ is a support of a cohomology class
$\xi\in H^*(X)$ if the restriction of this class to $X\setminus \Gamma$ is trivial
(i.e. $ \iota^* (\xi)=0$ where $\iota^*$ stands for the homomorphism of cohomology
groups induced by the embedding $X\setminus \Gamma \to X.$). If we consider the
$G$-equivariant cohomology of $G$-space $X,$ we can apply this notion to the
$G$-invariant subset $\Gamma.$ If $X$ is a manifold and $\Gamma$ is a  submanifold 
then $\Gamma $ is a support of the cohomology class that is dual to $\Gamma$; this cohomology class is denoted by $[\Gamma].$ This statement remains correct in the framework of equivariant cohomology and in the case when $\Gamma$ is a subvariety of a complex manifold $X.$ 
Conversely, if a submanifold $\Gamma$ having codimension $k$ is a support of a $k$-dimensional 
cohomology class $\xi$ of the manifold $X,$ then this cohomology class is proportional to the cohomology class dual to $\Gamma:$
\begin{equation}
\label{xi}
\xi=\mbox{const}[\Gamma]
\end{equation}
The same is true if $X$ is a complex manifold and $\Gamma$ is an irreducible subvariety. If $\Gamma$ is a reducible subvariety and is a support of a cohomology class having the dimension equal to the real codimension of $\Gamma,$ then this class is a linear combination of classes dual to irreducible components of $\Gamma.$ Similar statements are true in equivariant cases.

These results are well known but we were not able to find an appropriate reference.
See, however, \cite {Por} and \cite {Ful}.
Under some conditions one can apply these statements in the case when $X$ is infinite-dimensional
and $\Gamma$ has finite codimension. 

In particular, Schubert cycle $\bar{\Sigma}_{\mu}$ is a support of the $S^{1}$-equivariant cohomology
class $\Omega_{\mu}^{T}$ corresponding to Okounkov-Olshanski shifted Schur functions $s_{\mu}^{*}.$ 

Let us consider a $G$-equivariant map $f: Y\to X$ between $G$-manifolds. If an equivariant cohomology class
$\xi\in H_{G}^*(X)$ has a support that does not intersect $f(Y),$ then
$f^*(\xi)=0.$ Applying this remark to the Krichever map, we obtain the following statement.

If the intersection  of $\bar{\Sigma}_{S}$ and the
Krichever locus $k( \widehat{\mathcal{CM}}_g)$ is empty, then the
homomorphisn $k^*$ determined by the Krichever map sends the
equivariant cohomology class $\Omega_{\mu}^{T} $ into the trivial equivariant
cohomology class. Using Theorem \ref{t1}, we obtain a relation in
the tautological ring of $\widehat{\mathcal{CM}}_g$:
\begin{equation}
\label {tau}
\det\Psi_{\mu}=0.
\end{equation}
Here $\mu$ stands for a partition corresponding to the sequence $S.$
In particular, the above relation is satisfied if the sequence
violates the relations $s_{i}\leq 2g-2i$ for $1\leq i\leq g$ and
$s_{i}\leq g-i-1$ for $i\geq g+1.$ This relation can be expressed
also in terms of shifted Schur functions or factorial Schur
functions
\begin{equation}\label{relation}
s_{\mu}^{*}(z_{1},\cdots,z_{g})=t_{\mu}\left(-\frac{k^{*}x_{1}+l\psi}{\psi},\cdots,-\frac{k^{*}x_{g}+l\psi}{\psi}\right)=0,
\end{equation}
where $z_{i}=(x_{i}-(i-g)\psi)/(-\psi)$ for $1\leq i\leq g$ and
$l\geq l(\mu)-g+1.$ Probably, the most convenient way to express the
relations we found is to use the functions $t_\mu^i$ (homogeneous
components of factorial Schur functions) as in (\ref {tk}):
\begin {thm} \label {tr}
If $\mu$ is a partition corresponding to such a sequence $S$ that
one cannot find a Weierstrass sequence $S'$ obeying $S'\geq S$ then
$$\sum_{i}(-\psi)^{|\mu|-i}t_{\mu}^i(k^{*}x_1,\cdots,k^{*}x_g)=0.$$
\end {thm}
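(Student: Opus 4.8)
The plan is to translate the purely combinatorial hypothesis into a geometric emptiness statement about the Krichever locus and a Schubert cycle, and then to read off the vanishing directly from the explicit formula \eqref{tk}. In outline: show that the hypothesis forces $\bar{\Sigma}_{S}$ to miss the Krichever locus, apply the intersection-theoretic principle that an empty intersection kills the pulled-back dual class, and substitute the expansion of $k^{*}\Omega_{\mu}^{T}$ into homogeneous components $t_{\mu}^{i}$.

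First I would recall the characterization stated just before Lemma~\ref{lem1}: a point $k(C,p,z)$ of the Krichever locus lies in $\bar{\Sigma}_{S}$ precisely when the Weierstrass sequence $(s_{i}(p))$ at $p$ satisfies $s_{i}(p)\geq s_{i}$ for every $i$. By Theorem~\ref{3.1} each point of the Krichever locus carries a Weierstrass sequence, and by the monomial-curve construction every numerical semigroup of genus $g$ is realized as a Weierstrass semigroup; hence the Weierstrass sequences that actually occur are exactly those coming from numerical semigroups of genus $g$. Therefore the intersection
\[
\widehat{W}_{S}=\bar{\Sigma}_{S}\cap k(\widehat{\mathcal{CM}}_{g})
\]
is nonempty if and only if there exists a Weierstrass sequence $S'$ with $S'\geq S$. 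Since the hypothesis of the theorem asserts that no such $S'$ exists, we conclude $\widehat{W}_{S}=\emptyset$.

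Next I would invoke the equivariant intersection-theoretic principle developed in the paragraphs preceding \eqref{tau}: if a $G$-invariant subvariety $V$ of finite codimension and a $G$-invariant subvariety $Y$ have empty intersection, then the restriction to $Y$ of the equivariant class dual to $V$ vanishes. Applying this with $V=\bar{\Sigma}_{S}$ (of finite codimension in $\Gr_{g-1}(\hs)$), with $Y=k(\widehat{\mathcal{CM}}_{g})$, and with $G=S^{1}$, the emptiness of $\widehat{W}_{S}$ forces $k^{*}\Omega_{\mu}^{T}=0$ in the equivariant cohomology of $\widehat{\mathcal{CM}}_{g}$; this is exactly the relation \eqref{tau}. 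Finally I would substitute the explicit expansion \eqref{tk}, namely $k^{*}\Omega_{\mu}^{T}=\sum_{i}(-\psi)^{|\mu|-i}t_{\mu}^{i}(x_{1},\cdots,x_{g})$, and conclude that this sum is zero, which is the claimed identity.

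The step I expect to require the most care is the intersection-theoretic one, since both $\Gr_{g-1}(\hs)$ and the Krichever locus are infinite-dimensional and the argument is carried out $S^{1}$-equivariantly. The justification rests on the finite codimension of $\bar{\Sigma}_{S}$, which is what allows the classical implication ``empty intersection $\Rightarrow$ pulled-back dual class vanishes'' to persist in this setting, together with the identification of the equivariant cohomology of $\widehat{\mathcal{CM}}_{g}$ with the cohomology of the universal curve. Once $\widehat{W}_{S}=\emptyset$ is secured and the cohomological pullback is seen to vanish, the passage to the displayed polynomial identity is purely formal through \eqref{tk}.
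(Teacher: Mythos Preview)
Your argument is correct and follows the same route as the paper: the hypothesis ``no Weierstrass sequence $S'\geq S$'' is converted, via the characterization of membership in $\bar{\Sigma}_{S}$, into emptiness of $\widehat{W}_{S}$, which forces $k^{*}\Omega_{\mu}^{T}=0$ by the equivariant intersection-theoretic principle, and then \eqref{tk} yields the displayed identity. The only superfluous step is the appeal to the monomial-curve construction---for this direction you only need that any point of the Krichever locus landing in $\bar{\Sigma}_{S}$ already produces a Weierstrass sequence $S'\geq S$, which is immediate from Theorem~\ref{3.1} and the description of $\bar{\Sigma}_{S}$.
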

Of course, these relations are valid also in the case when we
restrict ourselves to smooth curves; we obtain relations in the
tautological ring of the universal curve $\mathcal{M}_{g,1}.$ Using
pull-push formula we get relations in $\mathcal{M}_g$:
\begin{equation} \label{w00}
\sum_{i}(-1)^{|\mu|-i}\kappa_{|\mu|-i-1}t_{\mu}^i(k^{*}x_1,\cdots,k^{*}x_g)=0.
\end{equation}
However, in the tautological rings of $\mathcal{M}_{g,1}$ and
$\mathcal{M}_g$ there exist other relations, in particular, the
relations following from the Mumford formula (\ref {Mumford}). Notice
that using (\ref {Mumford}) one can get the relations (\ref {tau})
on $\mathcal{M}_{g,1}$ with the $\Psi$-matrix  defined by
(\ref{psi}).

The theorem \ref {tr} gives an estimate of the tautological ring of
the space $\widehat{\mathcal{CM}}_g$ from above (Precise statements can be found below). 
To obtain an estimate of this ring from below, one can consider the restriction of
this ring to the fixed points of the $S^1$-action. Since the fixed
points of the $S^{1}$-action on $\Gr(\hs)$ are of the form
$\hs_{S},$ the fixed points on $\widehat{\mathcal{CM}}_{g}$
correspond to the monomial curves. For each Weierstrass sequence
$S,$ the inclusion map $\{\hs_{S}\}\to \widehat{\mathcal{CM}_{g}}$
induces a homomorphism on the equivariant cohomology:
\begin{equation*}
\ev_{S}:H_{S^{1}}^{*}(\widehat{\mathcal{CM}}_{g})\to
H_{S^{1}}^{*}(\{\hs_{S}\})\cong \mb C[\psi].
\end{equation*}
The ring homomorphism $\ev_{S}$ obeys
$\ev_{S}(\psi)=\psi$ and
$\ev_{S}(\lambda_{i})=e_{i}(s_{1}+1,\cdots,s_{g}+1)\psi^{i}$ for all
$1\leq i\leq g$. Taking the direct sum of all $\ev_{S}$ we obtain a ring
homomorphism $\ev=\bigoplus_{S}\ev_{S},$ where $S$ runs over all the
Weierstrass sequences.

The tautological ring of $\widehat{\mathcal{CM}}_g$ denoted by
$R=R(\widehat{\mathcal{CM}}_{g})$ is the $\mb Q$-subalgebra of
$H_{S^{1}}^{*}(\widehat{\mathcal{CM}}_{g})$ generated by
$\lambda_{1},\cdots,\lambda_{g}$ and $\psi.$ Consider the free
polynomial algebra $\mb Q[\Lambda_{1},\cdots,\Lambda_{g},\Psi]$
generated by commuting variables $\Lambda_{1},\cdots,\Lambda_{g},\Psi.$ The ring homomorphism $\epsilon:\mb Q[\Lambda_{1},\cdots,\Lambda_{g},\Psi]\to R$ sending $\Lambda_{i}\to\lambda_{i}$ and $\Psi\to\psi$ induces an isomorphism $\mb Q[\Lambda_{1},\cdots,\Lambda_{g},\Psi]/\ker\epsilon\cong R.$ The
tautological ring $R$ is the quotient ring of $\mb
Q[\Lambda_{1},\cdots,\Lambda_{g},\Psi]$ by the ideal of tautological
relations $I_{tau}=\ker\epsilon.$ Restricting $\ev$ to $R,$ we obtain a ring homomorphism from
$R$ to $\bigoplus_{S}\mb C[\psi].$  We obtain a homomorphism $\mb Q[\Lambda_{1},\cdots,\Lambda_{g},\Psi]\to\bigoplus_{S}\mb C[\psi]$ whose kernel is denoted by $I_{\ev}.$ It is obvious that the ideal $I_{tau}$ is contained in $I_{\ev}.$ We obtain a surjective homomorphism
\begin{equation}\label{lower}
R\to \mb Q[\Lambda_{1},\cdots,\Lambda_{g},\Psi]/I_{\ev}.
\end{equation}
Let $I$ be the ideal of $\mb Q[\Lambda_{1},\cdots,\Lambda_{g},\Psi]$
generated by $\widetilde{k}^{*}\Omega_{\mu}^{T},$ where $\widetilde{k}^{*}\Omega_{\mu}^{T}$
is the polynomial in $\mb Q[\Lambda_{1},\cdots,\Lambda_{g},\Psi]$ whose image in $R$ is $k^{*}\Omega_{\mu}^{T}$
with the property that $\Omega_{\mu}^{T}$ is the equivariant Schubert class of the Schubert
cycles $\bar{\Sigma}_{\mu}$ such that the intersection of
$\bar{\Sigma}_{\mu}$ and $k(\widehat{\mathcal{CM}}_{g})$ is empty.
We also know that $I$ is contained in $I_{tau}$ and thus we have a surjective
homomorphism
\begin{equation}\label{upper}
\mb Q[\Lambda_{1},\cdots,\Lambda_{g},\Psi]/I\to R.
\end{equation}

Recall that the Hilbert-Poincare series $P(A,t)$ of a graded algebra $A$ is the
generating function of $h_{i}(A)=\dim A_{i}$:
\begin{equation*}
P(A,t)=\sum_{i=0}^{\infty}h_{i}(A)t^{i}.
\end{equation*}

By (\ref{upper}) and (\ref{lower}), we have the following estimates
\begin{equation*}
h_{i}(A/I_{\ev})\leq h_{i}(R)\leq h_{i}(A/I).
\end{equation*}

In \cite{LSX}, we present the Hilbert-Poincare series of $\mb
Q[\Lambda_{1},\cdots,\Lambda_{g},\Psi]/I$ and of $\mb
Q[\Lambda_{1},\cdots,\Lambda_{g},\Psi]/I_{\ev}$ and estimate $h_{i}(R)$ for
curves of genus $g\leq 6.$

Every point $V$ in the Krichever locus is contained in the closed
subspace $\hs'$ of $\hs$ spanned by $\{z^{i}:i\neq -1\}.$ The space
$\hs'$ has a natural polarized structure coming from the polarized
structure of $\hs.$ This means that the Krichever map $k$ sends
$\widehat{\mc C\mc M}_{g}$ to $\Gr_{g}(\hs ').$  Schubert cells in
$\Gr_{g}(\hs ')$ are labeled by sequences $S$ obeying $s_{i}=g-1-i$
for $i\gg 0$; we will use the notation $\Sigma'_S$ for these cells.
It is easy to check that $\Sigma'_S= \Sigma_S\bigcap \Gr_{g}(\hs').$

Assume that $k^{-1}\Sigma'_{S}$ is nonempty. Then a point
$(C,p,z)\in k^{-1}\Sigma'_{S}$ if and only if $p$ has the
Weierstrass sequence $S.$ The Weierstrass cycle ${\widehat W}_S=k^{-1}\bar{\Sigma}_{S}'\bigcap 
\widehat{\mathcal{CM}}_g$ in $ \widehat{\mathcal{CM}}_g$ is a support of the cohomology class 
$k^*\Omega_{\mu}^{T},$ where
$\Omega_{\mu}^{T}$ is the equivariant cohomology class corresponding
to the Schubert cycle $\bar{\Sigma}'_{S}$ in the equivariant
cohomology of Grassmannian $\Gr(\hs ').$ Of course this statement can be applied also to the Weierstrass cycle ${\widehat W}_S\bigcap\widehat{\mathcal{M}}_g=k^{-1}\bar{\Sigma}_{S}'\bigcap 
\widehat{\mathcal{M}}_g$ in $ \widehat{\mathcal{M}}_g.$ We have mentioned that the equivariant cohomology of $ \widehat{\mathcal{M}}_g$ can be identified with cohomology of $\mc M_{g,1}$ by means of the forgetful map $\pi.$ We obtain that the Weierstrass cycle $W_S$ in $\mc M_{g,1}$ is a support of the cohomology class $(\pi ^*)^{-1}k^*\Omega_{\mu}^{T}.$ In the case when the codimension of the Weierstrass cycle $W_S$ equals to the dimension of the equivariant cohomology class $\Omega_{\mu}^{T}$ (in this case, one says that the Weierstrass cycle has expected dimension), we obtain information about the cohomology class dual to $W_S.$ (We can use the relation
between the notion of support and the notion of its dual class; see (\ref {xi}).)  Namely, if we assume that the Weierstrass cycle $W_S$ is irreducible, then the dual cohomology class $[W_S]$ is proportional to $(\pi ^*)^{-1} k^*\Omega_{\mu}^{T}.$ If we impose stronger condition that the intersection of the Krichever locus $k( \widehat{\mathcal{M}}_g)$ and the Schubert cycle $\bar{\Sigma}'_{S}$ is in general position, we can say that the coefficient of proportionality is equal to 1. If the Weierstass cycle $W_S$ is reducible, we can say that $(\pi ^*)^{-1}k^*\Omega_{\mu}^{T}$ is a linear combination of  cohomology classes dual to irreducible components of $W_S.$

Using the calculation of $k^*\Omega_{\mu}^{T} $ in
Section 2, we obtain the information about $[W_S]$ in terms of shifted Schur functions or factorial Schur functions:

\begin{thm}
If the complex codimension of $W_S$ is equal to $|\mu|=\sum \mu_i$ then
\begin{equation}
\label{w}
[W_S]=\mbox{const}\ ^{g}s_{\mu}^{*}(z_{1},\cdots,z_{g})(-\psi)^{|\mu|}=\mbox{const}\ ^{g}t_{\mu}\left(-\frac{k^{*}x_{1}}{\psi},\cdots.-\frac{k^{*}x_{g}}{\psi}\right)(-\psi)^{|\mu|},
\end{equation}
where $\mbox{const}$ is a  non-zero constant, $\mu$ is the partition corresponding to the sequence $S$ and
$z_{1},\cdots,z_{g}$ are the formal variables defined by
$z_{i}=(k^{*}x_{i}-(i-g-1)\psi)/(-\psi)$ for $1\leq i\leq
g.$
\end{thm}
To prove the theorem we notice that the complex codimension of $\bar{\Sigma}'_{S}$ in
$\Gr_g(\hs ')$ is equal to
$|S|=\sum_{i=1}^{i_{0}}(s_{i}+i-g)+\sum_{i=i_{0}+1}^{\infty}(s_{i}+i-g+1),$
where $i_{0}$ is the index so that $s_{i_{0}}\geq 0$ and
$s_{i_{0}+1}<0.$ (If $s_{i}<0$ for all $i,$ we set $i_{0}=0.$)  We associate to $S$ a partition $\mu=(\mu_{i})$ by
$\mu_{i}=s_{i}+i-g$ for $1\leq i\leq i_{0}$ and
$\mu_{i}=s_{i}+i-g+1$ if $i\geq i_{0}+1$; then the codimension is equal to $|\mu|.$
The partition corresponding to a
Weierstrass sequence has length at most $g$ by the Riemann-Roch
theorem. Therefore the factorial Schur function
$t_{\mu}(x_1-l,\cdots,x_g-l)$ is already in stable range for $l=0$.  To check that the constant in (\ref {w}) does not vanish we use Serre's theorem \cite {Ser}.

Again it is more convenient to use homogeneous components of factorial Schur functions.
Then
\begin{equation}\label {ww}
[W_S]=\mbox{const}\sum_{i}(-\psi)^{|\mu|-i}t_{\mu}^i(k^{*}x_1,\cdots,k^{*}x_g),
\end{equation}

Notice that in the case when the codimension of ${W}_S$ is
not equal to $|\mu|$
the RHS of (\ref {w}) makes sense, but is not related to $[W_S]$. One can say that it specifies the
cohomology of a``virtual'' Weierstrass  cycle. It is interesting to
notice that the multiplication rule of Schubert classes in the
equivariant cohomology of Grassmannian (see \cite {LS2},\cite {Mol})
gives a multiplication rule for ``virtual'' Weierstrass cycles.

One can consider Weierstrass cycles in $\mc M_g$ defined as images
of Weierstrass cycles in $\mc M_{g,1}$ by the forgetful map. In
other words, we define $W_{S}'$ as a subvariety consisting of curves
$C\in \mc M_g$ containing at least one point with Weierstrass
sequence $S$. Using the pull-push formula, we obtain the following
expression for the corresponding cohomology classes
\begin{equation} \label{w1}
[W_{S}']=\mbox{const}\sum_{i}(-1)^{|\mu|-i}\kappa_{|\mu|-i-1}t_{\mu}^i(k^{*}x_1,\cdots,k^{*}x_g).
\end{equation}
Here $\mu$ stands for the partition corresponding to $S$ and
$\kappa_{b}=\pi_{*}\psi^{b+1}$ are the kappa-classes. This
expression is valid if $W_{S}'$ has the expected dimension , i.e. the expression holds if the complex codimension of $W_{S}'$ in
$\mc M_{g}$ equals to $|\mu|-1.$

In a separate paper \cite {LSX}, we will apply the results of the
present paper to the moduli space of irreducible curves of low
genera. We estimate the dimension of Weierstrass cycles from below;
using the calculations of \cite {NM} and \cite {Nak}, we show that
for $g\leq 6,$ this estimate either coincides with the exact
dimension or differs by one. If our estimate coincides with the
exact dimension, we are able to calculate the homology class of a
Weierstrass cycle up to a constant factor; we performed this
calculation for $g\leq 6$. We compare the relations in the
tautological ring obtained in the present paper with the description
of the tautological ring of $\mathcal{M}_g $ obtained by Faber \cite
{Faber}.

{\bf Acknowledgements} We are indebted to  M. Movshev,  M. Mulase,
 F. Plaza-Martin and anonymous referee for useful comments. Our special thanks to B.
Osserman and V. Vologodsky for help with the generalization of our
results to non-smooth curves. Both authors thank Max-Planck
Insttiute f\"{u}r mathematik in Bonn for the generous support and
the wonderful environment. The second author was partially supported
by the NSF grant DMS-0805989.

\bibliography{sampartb}
\begin{thebibliography}{99}\large
\bibitem{AK} Arbarello, E., de Concini, C., Kac, V.G., Procesi,
C.,: Moduli Spaces of Curves and Representation Theory. Comm. Math.
Phys. 117, no. 1, 1-36 (1988).

\bibitem {OO} Okounkov, A., Olshanski, G.,: Shifted Schur Functions,
St. Petersburg Math. J. 9, 239-300 (1998).
\bibitem {Bini} Bini, G.,: Generalized Hodge Classes on the Moduli
Space of Curves. Beitr\"{a}ge Algebra Geom. 44, no.2, 559-565
(2003).
\bibitem {Faber} Faber, C.,: A conjectural description of the
tautological ring of the moduli space of curves. Moduli of curves
and abelian varieties, 109-129, Aspects Math., E33, Vieweg,
Braunschweig, (1999).
\bibitem {Ful} Fulton, W.,: Equivariant cohomology in algebraic geometry. Lecture
notes by Anderson, D (2007).
\bibitem {F} Fulton, W.,: Intersection Theory. Second edition.
Springer-Verlag, Berlin, (1998).
\bibitem {Gatto} Gatto, L., Ponza, F.,: Derivatives of Wronskians
with applications to families of special Weierstrass points. Trans.
Amer. Math. Soc. 351, no.6, 2233-2255 (1999).
\bibitem {HM} Harris, J., Morrison, I., Moduli of curves. Graduate
Texts in Mathematics, 187. Springer-Verlag, New York, 1998.
\bibitem {LamS} Lam, T., Shimozono M., $k$-double Schur functions
and equivariant (co)homology of affine Grassmannian, arXiv:
1105.2170.
\bibitem {LS} Liou, Jia-Ming., Schwarz, A.,: Moduli spaces and
Grassmannian, arXiv: 1111.1649
\bibitem {LS2} Liou, Jia-Ming, Schwarz, A.,: Equivariant Cohomology of
Infinite-Dimensional Grassmannian and Shifted Schur Functions,
arXiv: 1201.2554, to be published in Mathematical research letters.
\bibitem {Por} Portelli, D.,: On the supports for cohomology classes of complex manifolds. Rend. Istit. Mat. Univ. Trieste 44 (2012), 349-369. 

\bibitem{Loop} Pressley, A., Segal, G.,: Loop Groups, Lecture Notes in Math., 1111, Springer, Berlin, 1985.

\bibitem {LSX} Liou, Jia-Ming., Schwarz, A., Xu, Renjun, Weierstrass cycles and tautological ring for low genera
(in preparation)

\bibitem {NM} Mori, T., Nakano, T.,: On the Moduli space of pointed algebraic
curves of low genus- A computational approach. Tokyo J. Math. 27, no.
1, 239-253 (2004)
\bibitem {Nak} Nakano, T.,: On the Moduli space of pointed algebraic
curves of low genus II. Rationality. Tokyo J. Math. 31, no. 1,
147-160 (2008).
\bibitem {Mol} Molev, A. I., Sagan, B.,: A littlewood-Richardson
rule for factorial Schur functions, Trans. Amer. Math. Soc. 351, no.
11, 4429-4443 (1999).
\bibitem {Mulase} Mulase, M.,: Algebraic theory of the KP equations.
Perspective in mathematical physics, 151-217 (1994).

\bibitem {Mul} Mulase, M.,:Category of vector bundles on algebraic
curves and infinite dimensional Grassmannians, Internat. J. Math. 1,
no. 3, 293-342 (1990).

\bibitem {Mum} Mumford D.,: Towards an Enumerative Geometry of the
Moduli Space of Curves, Arithmetic and Geometry, Vol. II, 271-328,
Progr. Math., 36, Birkh\"{a}user Boston, Boston, MA, (1983).

\bibitem {Ser}  Serre, J. R.,: Alg\'ebre locale - multiplicit\'es, Lecture Notes in Mathematics, vol. 11.
Springer, Berlin Heidelberg New York 1961
\bibitem {SCH} Schwarz, A.,: Fermionic string and universal moduli space, Nucl. Phys. B317, 323 (1989)
\bibitem {SW} Segal, G., Wilson, G.,: Loop groups and equations of KdV
type. Inst. Hautes Etudes Sci. Publ Math. No. 61 (1985), 5-65.
\end {thebibliography}
\end {document}